\newtheorem{theorem}{Theorem}
\newtheorem{proposition}[theorem]{Proposition}
\newtheorem{lemma}[theorem]{Lemma}
\newtheorem{definition}[theorem]{Definition}
\newtheorem{theorem*}{Theorem}
\newtheorem{question*}[theorem*]{Question}
\newtheorem{conjecture*}[theorem*]{Conjecture}
\newtheorem{corollary*}[theorem*]{Corollary}
\newtheorem{theorem*e}{Teorema}
\newtheorem{question*e}[theorem*e]{Pregunta}
\newtheorem{conjecture*e}[theorem*e]{Conjetura}
\newtheorem{corollary*e}[theorem*e]{Corolario}
\newcommand{\interior}{\mathrm{int}}
\newcommand{\id}{\mathrm{id}}
\newcommand{\dist}{\mathrm{dist}}
\newcommand{\diam}{\mathrm{diam}}
\newcommand{\R}{\mathds{R}}
\newcommand{\Z}{\mathds{Z}}
\newcommand{\A}{\mathds{A}}
\newcommand{\T}{\mathds{T}}
\title{An elementary proof of a Theorem by Matsumoto}
\author{Luis Hernández--Corbato}
\address{IMPA, Estrada dona Castorina 110, Rio de Janeiro, Brazil.}
\email{luishcorbato@mat.ucm.es}
\keywords{Rotation number, annular continuum, prime ends.}
\subjclass[2010]{37E30, 37E45.}
\begin{document}

\maketitle

\begin{abstract}
Matsumoto proved in \cite{matsumoto} that the prime end rotation numbers associated to an invariant annular continuum
are contained in its rotation set. An alternative proof of this fact using only simple planar topology is presented.
\end{abstract}

\section{Introduction}

The rotation number was introduced by Poincaré to study the dynamics of circle homeomorphisms
$f\colon S^1 \to S^1$. Given a lift $\tilde{f} \colon \R \to \R$ of $f$, the rotation number of $\tilde{f}$ is defined as
$\rho(\tilde{f}) = \lim_{n \to \infty} (\tilde{f}^n(x) - x)/n \in \R$, for any $x \in \R$.
The limit is independent of $x$ and only depends on the lift $\tilde{f}$ up to an integer constant.
The rotation number $\rho(f) = \rho(\tilde{f}) \mod \Z \in \R/\Z$ measures the speed at which points
rotate under the iteration of $f$ and essentially classifies the dynamics.

The definition of rotation number does not extend smoothly to homeomorphisms of the annulus
$f \colon S^1 \times [-1,1] = \A \rightarrow \A$. Consider the universal cover of $\A$ identified to $\R \times [-1, 1]$
and let $\tilde{f} \colon \R \times [-1, 1] \to \R \times [-1,1]$ be a lift of $f$.
Denote $(x)_1$ the first coordinate of a point $x \in \R \times [-1,1]$.
Then $\lim_{n \to \infty}((\tilde{f}^n(x))_1 - (x)_1)/n$ now depends on $x$ and, even worse, may not exist.
Instead of looking at orbits it is useful to consider $f$--invariant probabilities $\mu$ in $\A$ and define
$$\rho(\tilde{f}, \mu) = \int_{\A} (\tilde{f}(s(y)))_1 - (s(y))_1 d\mu (y),$$
where $s\colon S^1 \times [-1,1] \to \R \times [-1,1]$ is a section of the universal cover
$\pi: \R \times [-1, 1] \to S^1 \times [-1, 1]$, i.e. $s \pi = \id$.
More generally, denote
$$\rho_{mes}(\tilde{f}, X) =
  \{\rho(\tilde{f}, \mu): \mu\text{ is an }f\text{--invariant Borel probability and } supp(\mu) \subset X\},$$
for any $f$--invariant set $X \subset \A$.
Since the space of invariant Borel probabilities endowed with the weak topology is compact and convex
it follows that $\rho_{mes}(\tilde{f}, X)$ is a compact interval.

A continuum $X \subset \interior(\A)$ is \emph{essential} if the two boundary components $S^1 \times \{-1\}$
and $S^1 \times \{1\}$ of $\A$ belong to different connected components,
denoted, respectively, $U_-$ and $U_+$, of $\A \setminus X$.
It is called an (essential) \emph{annular} continuum if, additionally, $\A \setminus X = U_- \cup U_+$.
The previous notions of rotation can be applied to study the dynamics of invariant annular continua.
In contrast with the one--dimensional case, the coexistence of different rotation numbers is typical.
An example is the Birkhoff attractor $\Lambda$ \cite{birkhoff},
which is the global attractor of a dissipative diffeomorphism of the open annulus.
Even though $\Lambda$ has empty interior, it contains infinitely many periodic orbits with different rotation numbers.

There is yet another way of measuring the rotation of an invariant annular continuum. After identifying $S^1 \times \{1\}$
to a point, $U_+$ is transformed into an invariant open topological disk.
Carathéodory's prime end theory (see \cite{mather})
permits to compactify this new domain with a boundary circle, the set of prime ends of $U_+$,
producing a closed topological disk $\hat{U}_+$.
The construction being topological allows the homeomorphism $f$ to be extended to a homeomorphism
$\hat{f}\colon \hat{U}_+ \to \hat{U}_+$.
Furthermore, a lift $\tilde{f}$ of $f$ uniquely determines a lift $\hat{F} \colon \R \to \R$
of the restriction of $\hat{f}$ to the circle of prime ends of $U_+$, boundary of $\hat{U}_+$, and viceversa.
The \emph{upper prime end rotation number} of the lift $\tilde{f}$ in $X$ is defined as the rotation number
of $\hat{F}$ and denoted $\rho_+(\tilde{f}, X)$. The \emph{lower prime end rotation number} $\rho_-(\tilde{f}, X)$
is defined analogously. One can think of these rotation numbers as measures of the rotation of the boundary of $X$
as seen from the exterior.

An alternative intuitive approach to the prime end rotation numbers in terms of accessible points is discussed in \cite{barge}.
A point $p$ is called \emph{accessible} from a domain $U$, $p \notin U$, provided
there is an arc $\gamma\colon [0,1] \to U \cup \{p\}$ such that $\gamma([0,1)) \subset U$ and $\gamma(1) = p$.
Denote $\tilde{U}_+, \tilde{X}$ the lifts of $U_+, X$ to the universal cover $\R \times [-1, 1]$ of $\A$.
Let $x \neq x' \in \tilde{X}$ be accessible from $U_+$ and $\gamma, \gamma' \colon [0,1] \to \R \times [-1,1]$
be two disjoint arcs such that $\gamma(0), \gamma'(0) \in \R \times \{1\}$, $\gamma([0,1)), \gamma'([0,1)) \subset \tilde{U}_+$
and $\gamma(1) = x$ and $\gamma'(1) = x'$.
Denote $\gamma(0) = (r, 0), \gamma'(0) = (r', 0)$ and define $x \prec x'$ if and only if $r < r'$.
Then, $\prec$ defines a linear order in the set of points of $X$ accessible from $U_+$.
For any $x, y$ in that set and $n \in \Z$ there is a unique $k = k(x, y)$ such that $T^k(y) \preceq x \prec T^{k+1}(y)$,
where $T$ denotes the deck transformation of the universal cover.
One can prove that $\lim_{n \to \infty} (k(\tilde{f}^n(x), y))/n$
is independent of $x$ and $y$ and is equal to $\rho_+(\tilde{f}, X)$.


The goal of this article is to give an elementary proof of the following theorem
due to Matsumoto \cite{matsumoto}.
\begin{theorem}[Matsumoto]\label{thm:}
Let $f\colon \A \to \A$ be a homeomorphism isotopic to the identity and $X \subset \interior(\A)$
an invariant annular continuum. For any lift $\tilde{f}$ of $f$
$$\rho_+(\tilde{f}, X), \rho_-(\tilde{f}, X) \in \rho_{mes}(\tilde{f}, X).$$
\end{theorem}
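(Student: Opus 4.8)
The plan is to realize the upper prime end rotation number as $\rho(\tilde f,\mu)$ for an invariant measure $\mu$ obtained from empirical averages along the orbit of an accessible point, bridging the two notions of rotation by a planar lemma that compares the combinatorial order $\prec$ with the horizontal coordinate. I treat $\rho_+$; the case of $\rho_-$ is identical after exchanging $U_+$ and $U_-$. Fix a point $x\in X$ accessible from $U_+$ and a lift $\tilde x\in\tilde X$; since $f$ preserves the component $U_+$ and carries connecting arcs to connecting arcs, each iterate $f^n(x)$ is again accessible and $\tilde f^n(\tilde x)$ is a lift of it. Consider the empirical measures $\mu_n=\frac1n\sum_{i=0}^{n-1}\delta_{f^i(x)}$. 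By weak-$*$ compactness of the probabilities on the compact set $X$, a subsequence $\mu_{n_j}$ converges to a Borel probability $\mu$, which is $f$--invariant and supported in $X$ because the orbit of $x$ stays in the closed invariant set $X$. Writing $\phi(z)=(\tilde f(s(z)))_1-(s(z))_1$ for the continuous horizontal displacement, the value $\phi(f^i(x))$ equals $(\tilde f^{i+1}(\tilde x))_1-(\tilde f^i(\tilde x))_1$ since $\tilde f$ commutes with $T$; hence $\int\phi\,d\mu_n=\frac1n\big((\tilde f^n(\tilde x))_1-(\tilde x)_1\big)$ telescopes, and continuity of $\phi$ gives $\rho(\tilde f,\mu)=\lim_j\frac1{n_j}(\tilde f^{n_j}(\tilde x))_1$.

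The heart of the matter, and the step I expect to be the main obstacle, is a comparison between the order $\prec$ and the horizontal coordinate: there is a constant $C$ so that every accessible $p\in\tilde X$ satisfies $|k(p,y)-(p)_1|\le C$, where $y\in X$ is a fixed accessible reference point and $k(p,y)$ is defined by $T^{k}(y)\preceq p\prec T^{k+1}(y)$. Since $k(T^m p,y)=k(p,y)+m$ and $(T^m p)_1=(p)_1+m$, the quantity $k(p,y)-(p)_1$ is $T$--invariant, so it suffices to bound it on one fundamental domain. I would argue by contradiction using a fixed connecting arc $\beta$ to a lift $\tilde y$, of bounded horizontal extent $[-K,K]$: if the quantity were unbounded one produces accessible points $p$ with $(p)_1\in[0,1)$ yet $p\succ T^m(\tilde y)$ for some $m>K+1$. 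The crosscut $T^m\beta$ of $\tilde U_+$, which lies in $\{(z)_1>1\}$, separates $\tilde U_+$ into two pieces, and the defining property of $\prec$ forces $p$ to be accessible from the piece lying to the right of $T^m\beta$; a separation argument (the vertical line $\{(z)_1=m-K-\tfrac12\}$ does not meet $T^m\beta$, hence stays in the left piece) shows that this right piece is contained in $\{(z)_1>1\}$, contradicting $(p)_1<1$. Making this separation argument precise, together with the well-definedness of $\prec$ asserted above, is where the simple planar topology is really used.

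Applying the lemma to the accessible points $p=\tilde f^n(\tilde x)$ yields $|k(\tilde f^n(x),y)-(\tilde f^n(\tilde x))_1|\le C$ for all $n$. Dividing by $n_j$ and letting $j\to\infty$, the bounded error disappears, so $\lim_j\frac1{n_j}(\tilde f^{n_j}(\tilde x))_1=\lim_j\frac1{n_j}k(\tilde f^{n_j}(x),y)$. The right-hand limit equals $\rho_+(\tilde f,X)$ by the accessible--point description of the upper prime end rotation number recalled in the introduction, while the left-hand side equals $\rho(\tilde f,\mu)$ by the first paragraph. Therefore $\rho_+(\tilde f,X)=\rho(\tilde f,\mu)\in\rho_{mes}(\tilde f,X)$, and the same reasoning carried out from below gives $\rho_-(\tilde f,X)\in\rho_{mes}(\tilde f,X)$.
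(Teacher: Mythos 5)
Your first paragraph is fine, and in fact the paper uses the same device (limits of empirical measures along an orbit, with the telescoping of the displacement function) in the proof of its Lemma \ref{lem:aladerecha}. But your entire argument hinges on the comparison lemma of the second paragraph --- a uniform constant $C$ with $|k(p,y)-(p)_1|\le C$ for all accessible $p\in\tilde{X}$ --- and that lemma is false. The quantity $k(p,y)-(p)_1$ is indeed $T$--invariant, but it is \emph{not} bounded on the set of accessible points with $(p)_1\in[0,1)$: the address $k(p,y)$ records where an accessing arc lands on $\R\times\{1\}$, not where $p$ sits, and the channel through which $p$ is reached can wind horizontally an arbitrary amount. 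In the paper's language, the fundamental region $V$ of $\tilde{U}_+$ may have $(V)_1$ unbounded below, or unbounded in both directions --- these are exactly cases (ii) and (iii), and Proposition \ref{prop:} makes precise how snake-like $V$ can be. A point accessible through $V$ then has bounded address $k$ while $(p)_1$ is arbitrarily negative (think of a continuum carrying longer and longer spiraling fingers whose tips are accessible through a channel opening at a fixed place). Your separation argument cannot close this gap: the vertical line $\{(z)_1=m-K-\tfrac12\}$ is not contained in $\tilde{U}_+$ (it meets $\tilde{X}$ in general), so while it separates the strip it does not separate $\tilde{U}_+$, and the component of $\tilde{U}_+\setminus T^m\beta$ on the right of the crosscut can perfectly well snake back into $\{(z)_1<1\}$. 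So the step you yourself flagged as the main obstacle is precisely where the proof breaks.

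It is worth seeing that your lemma, if true, would prove much more than Matsumoto's theorem: it would give $\lim_n(\tilde{f}^n(\tilde{x}))_1/n=\rho_+(\tilde{f},X)$ for \emph{every} accessible point, i.e.\ accessible orbits would travel at exactly the prime end speed. The decoupling of prime-end position from spatial position is the whole difficulty of the theorem, and the paper's proof is organized around exactly this failure: it argues by contradiction from the normalized inequality $\rho_+(\tilde{f})<0<\inf\rho_{mes}(\tilde{f},X)$, extracts from your empirical-measure argument a uniform displacement $(\tilde{f}^m(x))_1\ge(x)_1+1$ on $\tilde{X}$ (Lemma \ref{lem:aladerecha}), and then treats the three cases for $(V)_1$ separately, using hairs and crosscuts (Lemmas \ref{lem:primeends}, \ref{lem:pasaporizquierda}, \ref{lem:pasaporderecha}) to track how accessibility addresses move under $\tilde{f}$. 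Your comparison lemma essentially holds only when $(V)_1$ is bounded, a sub-case of the easy case (i); cases (ii) and (iii), which occupy most of the paper, are exactly the situations your argument cannot reach.
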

Recall that a classical result due to Epstein shows that $f: \A \to \A$ is isotopic to the identity
if and only if preserves orientation and each of the boundary circles.

Matsumoto's proof of Theorem \ref{thm:}
uses Le Calvez's deep theorem on the existence of a foliation by Brouwer lines for any orientation
preserving homeomorphism of $\R^2$ in its equivariant form \cite{lecalvez} for the torus $\T^2$.
The proof then goes on concluding the result in each of several cases, depending on the topological type of the aforementioned
foliation. In this paper an alternative proof of Theorem \ref{thm:} is presented.
The arguments involve only basic facts from planar topology and prime end theory
making our approach elementary in nature.

Theorem \ref{thm:} allows to estimate the size of the rotation set of $X$, $\rho_{mes}(\tilde{f}, X)$, without
precise information of the dynamics within $X$. It can be subsequently applied to conclude the existence
of periodic orbits in $X$ of any rotation number
$p/q \in [\rho_{\pm}(\tilde{f}, X),  \rho_{\mp}(\tilde{f}, X)] \subset \rho_{mes}(\tilde{f}, X)$
provided some extra hypothesis is satisfied:
either $f_{X}$ is chain--recurrent (polishing an argument due to Franks \cite{franks}, see \cite{koro, matsumoto})
or $f$ is area--preserving \cite{frankslecalvez}
or $X$ is a cofrontier \cite{barge} or, more generally, a circloid \cite{koro}.

In order to ease the notation, for any integer $k$ the action $T^k(S)$ of the deck transformation $T$
on a set $S \subset \R \times [-1, 1]$ will be denoted $S + k$. Additionally, the projection $p(S)$ of $S$ under
the first coordinate map $p: \R \times [-1, 1] \to \R$ will be denoted $(S)_1$.

\subsection*{Acknowledgements}
The author gratefully acknowledges Prof. Shigenori Matsumoto for pointing
out an error in a draft version of this work and warmly thanks Prof. Andrés Koropecki for useful conversations
which brought Matsumoto's work to attention. The author has been supported by CNPq (Conselho Nacional de Desenvolvimento Científico e Tecnológico - Brasil) and partially by MICINN grant MTM2012-30719.
As an aside, special compliments are sent to the Spanish administration for its diligence.

\section{Proof of Theorem \ref{thm:}}

Next lemma follows directly from the definitions.

\begin{lemma}\label{lem:rotationnumbers}
For any integer $k$,
$$\rho_{mes}(T^k \tilde{f}^n, X) = n \rho_{mes}(\tilde{f}, X) + k, \enskip \enskip
\rho_{\pm}(T^k \tilde{f}^n, X) = n \rho_{\pm}(\tilde{f}, X) + k.$$
\end{lemma}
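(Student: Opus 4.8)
The plan is to verify both identities directly from the definitions, treating the measure-theoretic rotation set and the prime end rotation numbers separately. The key observation throughout is that iterating a lift composes the displacement cocycles additively, while applying a power of the deck transformation $T^k$ shifts the first coordinate by $k$ on the whole cover.

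For the first identity, concerning $\rho_{mes}$, I would start from the defining integral. Fix an $f$-invariant Borel probability $\mu$ supported in $X$ and a section $s$. The function $y \mapsto (\tilde{f}(s(y)))_1 - (s(y))_1$ is the displacement recorded by one iterate. For the power $\tilde f^n$, I would use the telescoping identity
$$(\tilde{f}^n(s(y)))_1 - (s(y))_1 = \sum_{j=0}^{n-1} \bigl( (\tilde{f}(s(f^j(y))))_1 - (s(f^j(y)))_1 \bigr),$$
which follows because $\tilde f$ is a lift and the intermediate terms cancel once one passes through the section. Integrating against $\mu$ and invoking $f$-invariance of $\mu$ (so that each summand contributes the same integral) gives $\rho(\tilde{f}^n, \mu) = n\, \rho(\tilde{f}, \mu)$. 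For the $T^k$ factor, since $(T^k z)_1 = (z)_1 + k$ for every $z$, the displacement of $T^k \tilde{f}^n$ exceeds that of $\tilde{f}^n$ by the constant $k$, and integrating the constant $k$ against a probability measure yields $k$. Hence $\rho(T^k\tilde{f}^n, \mu) = n\,\rho(\tilde{f},\mu) + k$. Taking the union over all admissible $\mu$ gives the claimed set identity; I should note that the set of $f$-invariant probabilities supported in $X$ is the same regardless of which lift we chose, so only the values move, establishing $\rho_{mes}(T^k\tilde{f}^n, X) = n\,\rho_{mes}(\tilde{f}, X) + k$.

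For the second identity, concerning $\rho_\pm$, I would argue on the circle of prime ends. Recall that a lift $\tilde f$ uniquely determines a lift $\hat F \colon \R \to \R$ of the prime end homeomorphism, and $\rho_+(\tilde f, X) = \rho(\hat F)$. The correspondence is compatible with composition and with the deck transformation: the lift associated to $\tilde f^n$ is $\hat F^n$, and the lift associated to $T^k \tilde f$ is $\hat F + k$ (translation by $k$ comes from the fact that $T$ acts as the integer-translation deck transformation on the prime end circle's universal cover). Consequently the lift associated to $T^k\tilde{f}^n$ is $\hat F^n + k$. Then I invoke the two elementary properties of the classical rotation number of circle maps, namely $\rho(\hat F^n) = n\,\rho(\hat F)$ and $\rho(\hat F + k) = \rho(\hat F) + k$, to conclude $\rho_+(T^k\tilde{f}^n, X) = n\,\rho_+(\tilde{f}, X) + k$, and identically for $\rho_-$.

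I expect the only genuine point requiring care to be the bookkeeping of how the lift $\hat F$ depends functorially on $\tilde f$ — specifically, confirming that the composition $T^k \tilde f^n$ induces exactly the prime end lift $\hat F^n + k$ rather than some other representative differing by an integer. This is really a matter of unwinding the stated bijection between lifts $\tilde f$ and lifts $\hat F$ and checking it respects the relevant group operations; once that naturality is granted, both displayed equalities reduce to the standard scaling and translation behaviour of rotation numbers, so the lemma indeed \emph{follows directly from the definitions} as asserted.
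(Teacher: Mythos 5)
Your prime-end half is correct: the bijection $\tilde{f} \leftrightarrow \hat{F}$ respects composition and sends $T$ to translation by $1$, so the identity for $\rho_\pm$ reduces to the classical facts $\rho(\hat{F}^n) = n\rho(\hat{F})$ and $\rho(\hat{F}+k) = \rho(\hat{F})+k$; this is exactly the routine unwinding the paper leaves implicit. Your computation $\rho(T^k\tilde{f}^n,\mu) = n\rho(\tilde{f},\mu)+k$ for an $f$--invariant $\mu$ (telescoping through the section, invariance of $\mu$, constant shift by $k$) is also fine. But there is a genuine gap in how you conclude the set identity for $\rho_{mes}$: the map $T^k\tilde{f}^n$ is a lift of $f^n$, so by the paper's definition $\rho_{mes}(T^k\tilde{f}^n, X)$ ranges over the $f^n$--invariant probabilities supported in $X$, a class that is in general strictly larger than the $f$--invariant ones (the delta measure at one point of a period--$n$ orbit is $f^n$--invariant but not $f$--invariant). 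Your remark that "the set of $f$--invariant probabilities is the same regardless of which lift we chose" is true but addresses only lift-independence, not the passage from $f$ to $f^n$; as written you establish only the inclusion $n\,\rho_{mes}(\tilde{f},X)+k \subset \rho_{mes}(T^k\tilde{f}^n,X)$. The missing inclusion is precisely the one the paper needs: to pass from $\rho_+(\tilde{f}) < p/q < \inf\rho_{mes}(\tilde{f},X)$ to Inequality (1), one must know $\inf\rho_{mes}(T^{-p}\tilde{f}^q,X) > 0$ over \emph{$f^q$--invariant} measures, because after the renaming the measures produced in Lemma \ref{lem:aladerecha} are invariant only under the renamed map, i.e.\ under $f^q$.

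The gap is easily closed by an averaging argument. Given an $f^n$--invariant $\mu$ supported in $X$, set $\bar{\mu} = \frac{1}{n}\sum_{j=0}^{n-1} f^j_*\mu$, which is $f$--invariant and supported in $X$ since $X$ is $f$--invariant. Let $\phi_j(z) = (\tilde{f}^j(z))_1 - (z)_1$; these functions are invariant under $T$, hence descend to $\A$. The cocycle identity $\phi_n\circ\tilde{f} = \phi_n + \phi_1\circ\tilde{f}^n - \phi_1$, integrated against $\mu$ and combined with the $f^n$--invariance of $\mu$, yields $\rho(\tilde{f}^n, f_*\mu) = \rho(\tilde{f}^n,\mu)$, and iterating gives $\rho(\tilde{f}^n,\bar{\mu}) = \rho(\tilde{f}^n,\mu)$. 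Hence $\rho(T^k\tilde{f}^n,\mu) = \rho(T^k\tilde{f}^n,\bar{\mu}) = n\,\rho(\tilde{f},\bar{\mu})+k \in n\,\rho_{mes}(\tilde{f},X)+k$, which is the missing inclusion. With this paragraph added, your proof is complete and does deliver the lemma in the strength in which the paper actually uses it.
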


The proof of Theorem \ref{thm:} presented here only deals with the upper prime rotation number and shows that
$\rho_+(\tilde{f}) \ge \inf \rho(\tilde{f}, X)$, the other cases being completely analogous.

Argue by contradiction: suppose there are integers $p, q$ such that $\rho_+(\tilde{f}) < p/q < \inf \rho(\tilde{f}, X)$.
As a consequence of Lemma \ref{lem:rotationnumbers}, $\rho_+(T^{-p}\tilde{f}^q) < 0 < \inf \rho(T^{-p}\tilde{f}^q, X)$.
Thus, after renaming, it is possible to assume
\begin{equation}\label{eq:}
\rho_+(\tilde{f}) < 0 < \inf \rho(\tilde{f}, X).
\end{equation}

Some notation to describe the shape of $\tilde{U}_+$ is now introduced.
Let $\eta = \max\{y \in \R: (0, y) \in \tilde{X}\}$
and $\beta\colon [0,1] \to \R \times [-1, 1]$ be a vertical arc with endpoints $\beta(0) = (0,1) \in \R \times \{1\}$
and $\beta(1) = (0, \eta) \in \tilde{X}$. Denote $x_0 = \beta(1)$ and use $\beta$ also to
denote the image of the arc $\beta$. This abuse of notation is present throughout the text.
The arcs $\beta' = \beta \setminus \{x_0\}$ and $\beta' + 1$ bound a region in $\tilde{U}_+$
which contains the segment $(0,1) \times \{1\}$.
The closure of this region, as a subset of $\tilde{U}_+$, will be denoted $V$ and thought of as a fundamental region.
Clearly,
$$\tilde{U}_+ = \bigcup_{k \in \Z} (V + k)$$
and $(V - 1) \cap V = \beta'$.
Define $$(V + k)^+ = \bigcup_{j \ge k} (V+j), \enskip (V + k)^- = \bigcup_{j \le k} (V+j).$$

Note that $x_0 \in \tilde{X}$ is accessible both from $\tilde{U}_+$ and from $V$.
The following lemma is based on the interpretation of the prime end rotation number in terms of
accessible points and their induced order $\prec$ as was discussed in the introduction.

\begin{lemma}\label{lem:primeends}
\mbox{}
\begin{enumerate}
\item For every point $x$ accessible from $\tilde{U}_+$ there exists $k \in \Z$ such that $x$ is accessible from $V+k$.
\item If $x$ is accessible from $V+k$ then $\tilde{f}^{n}(x)$ is accessible from $(V+k)^+$, if $n \le 0$, or
from $(V+k)^-$, if $n \ge 0$.
\item Suppose $x \in \tilde{X}$ is accessible from $V+k_1$ and $\tilde{f}^{-1}(x)$ is accessible from $V + k_2$.
Then, for any point $z \in \tilde{X}$ accessible from $\tilde{U}_+$
there exists an integer $n$ so that $\tilde{f}^n(z)$ is accessible from $V+j$, for some $k_1 \le j \le k_2$.
\end{enumerate}
\end{lemma}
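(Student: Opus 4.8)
The plan is to pull all three statements back to the linear order $\prec$ on the set of points of $\tilde{X}$ accessible from $\tilde{U}_+$, using the description from the introduction: a lift $\tilde{f}$ induces an orientation--preserving, $T$--equivariant bijection of this ordered set whose associated circle map has lift $\hat{F}$, with $\rho(\hat{F})=\rho_+(\tilde{f})<0$ by \eqref{eq:}. The dividing arcs $\beta+k$ are disjoint vertical segments whose only points in $\partial\tilde{U}_+$ are their bottom endpoints $x_0+k\in\tilde{X}$, and $\beta'+k$ is exactly $(V+k-1)\cap(V+k)$; so ``accessible from $V+k$'' corresponds to lying in the closed order interval between $x_0+k$ and $x_0+(k+1)$, ``accessible from $(V+k)^+$'' to $\succeq x_0+k$, and ``accessible from $(V+k)^-$'' to $\preceq x_0+(k+1)$.

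For (1) I would argue with the terminal segment of an access arc. Given $\gamma$ with $\gamma([0,1))\subset\tilde{U}_+$ and $\gamma(1)=x$, if $x$ lies on no dividing arc then, since the $\beta+k$ are closed, disjoint and locally finite, a small ball $W$ about $x$ meets only finitely many of them and can be shrunk to miss all; then $W\cap\tilde{U}_+$ lies in a single region $V+k$, so a final subarc of $\gamma$ does too, giving accessibility from $V+k$. The only remaining case is $x=x_0+k$ for some $k$ (the sole boundary points on a dividing arc), which is handled directly by translating the hypothesis that $x_0$ is accessible from $V$ to conclude that $x_0+k$ is accessible from $V+k$.

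For (2) I would first observe that, by $T$--equivariance $\tilde{f}(x_0+k)=\tilde{f}(x_0)+k$ and the monotonicity of $\tilde{f}$ on accessible points, the full family of ray--containments reduces by induction on $n$ to the \emph{single} order relation $\tilde{f}(x_0)\preceq x_0$ (equivalently $\hat{F}(0)\le 0$): from it one gets $\tilde f(x_0+k)\preceq x_0+k$ for every $k$, hence $\tilde f$ maps $\{\,\preceq x_0+(k+1)\,\}$ into itself, and iterating yields $\tilde f^{n}(x)\preceq x_0+(k+1)$ for all $n\ge 0$ whenever $x$ is accessible from $V+k$; the case $n\le 0$ is symmetric. \emph{The main obstacle is precisely this one--step inequality}, and it is the only place where $\rho_+<0$ is genuinely needed: since $\rho(\hat{F})<0$ it is impossible that $\hat{F}(\theta)>\theta$ for every $\theta$, so some accessible point is order--non--increasing under $\tilde{f}$, and I would choose the reference point $x_0$ (equivalently the cross--cut $\beta$ joining the upper boundary to an accessible point of $\tilde X$) to be such a point. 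One must also check that this order bound is not spoiled at the endpoints $x_0+k$, where accessibility from two adjacent regions overlaps, but there the relation holds from both sides by continuity of the order.

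For (3) I would combine (1) and (2). Applying (2) with $n=-1$ to $x$ (accessible from $V+k_1$) shows $\tilde f^{-1}(x)$ is accessible from $(V+k_1)^+$, so the hypothesis forces $k_1\le k_2$, and the passage from $\tilde f^{-1}(x)\in V+k_2$ to $x\in V+k_1$ realizes a leftward jump of $k_1-k_2$ regions in one step. By $T$--equivariance and monotonicity this bounds the per--step change of the region index of \emph{any} accessible point, so no single application of $\tilde f$ can move a point's region index left by more than essentially $k_2-k_1+1$. Now take an arbitrary accessible $z\in\tilde{X}$; by (1) it is accessible from some $V+m$, and by (2) together with $\rho_+<0$ its region index drifts to $-\infty$ as $n\to+\infty$ and to $+\infty$ as $n\to-\infty$. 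A monotone integer sequence drifting through all values cannot skip a band of width exceeding its maximal step, so some $\tilde f^{n}(z)$ must land in a region $V+j$ with $k_1\le j\le k_2$. The delicate point here, as in (2), is the careful bookkeeping of the order at region boundaries so that the trapping band $\{k_1,\dots,k_2\}$ is genuinely wide enough for the drift not to overshoot it.
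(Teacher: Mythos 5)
Your route is the one the paper intends: the paper gives no proof of this lemma at all, saying only that it ``is based on the interpretation of the prime end rotation number in terms of accessible points and their induced order $\prec$'', and your dictionary (``accessible from $V+k$'' corresponds to the order interval between $x_0+k$ and $x_0+k+1$, with $\hat{F}$ the lift of the induced prime--end circle map and $\rho(\hat{F})=\rho_+(\tilde{f})<0$ by Inequality (\ref{eq:})) is exactly that interpretation, so you are supplying details the paper omits. Two slips in (1) and (2) are repairable. In (1), the claim that $W\cap\tilde{U}_+$ lies in a single region is false --- a point of $\tilde{X}$ may be accessible from several regions $V+j$, so every neighbourhood of it can meet several of them --- but what you actually use is fine: the terminal subarc of $\gamma$ is connected and disjoint from the closed, locally finite union of the arcs $\beta'+k$, hence lies in a single component of the complement, i.e.\ in a single $V+k$. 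In (2), you do \emph{not} have the freedom to ``choose the reference point $x_0$'': the lemma concerns the already--fixed $V$, whose defining arc $\beta$ is vertical, and verticality is what the endgame of Case (iii) relies on ($(\beta)_1=\{0\}$ must miss every element of $\mathcal{A}$). Fortunately no choice is needed: $\rho(\hat{F})<0$ forces $\hat{F}(\theta)<\theta$ for \emph{every} $\theta$, because if $\hat{F}-\id$ were nonnegative somewhere it would vanish somewhere by the intermediate value theorem and then $\rho(\hat{F})=0$; so the one--step inequality holds at $\theta_0$ itself and your induction in (2) goes through.

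The genuine gap is in (3). Your mechanism --- a translation--equivariant bound on the per--step leftward jump of the region index, plus ``a monotone integer sequence drifting through all values cannot skip a band of width exceeding its maximal step'' --- does not close, and the hedge ``essentially $k_2-k_1+1$'' hides an off--by--one that kills it. The band $\{k_1,\dots,k_2\}$ has width $k_2-k_1+1$, whereas the best \emph{uniform} bound extractable from the pair $(y,x)$, $y=\tilde{f}^{-1}(x)$, by equivariance is $k_2-k_1+2$: if $w$ is accessible from $V+j$, the only translate of $y$ you can guarantee to lie below $w$ is $y+(j-k_2-1)\preceq x_0+j\preceq w$, whence $\tilde{f}(w)\succeq x+(j-k_2-1)\succeq x_0+(k_1+j-k_2-1)$, which permits the index to drop to $k_1+j-k_2-2$; in particular a single step from index $k_2+1$ could a priori land at $k_1-1$, overshooting the band --- exactly the possibility you flag in your last sentence without excluding it. The missing idea is to anchor the comparison \emph{at the band}, where it costs nothing: if $w$ is accessible from $V+j$ with $j\ge k_2+1$, then $w\succeq x_0+(k_2+1)\succeq y$, so monotonicity gives $\tilde{f}(w)\succeq \tilde{f}(y)=x\succeq x_0+k_1$, hence $\tilde{f}(w)$ is accessible from some $V+j'$ with $j'\ge k_1$ (the borderline case $\tilde{f}(w)=x_0+k_1$ being itself accessible from $V+k_1$). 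Thus a point strictly to the right of the band can never map strictly to its left. Now let $n_0$ be the last $n$ for which $\tilde{f}^{n}(z)$ is accessible from some $V+j$ with $j>k_2$; this $n_0$ exists and is finite because the indices available to a single point differ by at most one (by the order--interval dictionary) and drift to $\mp\infty$ as $n\to\pm\infty$ by (2) and $\rho_+<0$. Then $\tilde{f}^{n_0+1}(z)$ has all its indices $\le k_2$ by maximality and some index $\ge k_1$ by the anchored comparison, which is the desired $k_1\le j\le k_2$.
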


A \emph{crosscut} of $\tilde{U}_+$ is an arc $c$
whose endpoints lie in $\tilde{X}$ and whose interior is contained in $\tilde{U}_+$.
By definition, the endpoints of $c$ are accessible from $\tilde{U}_+$.
Recall a standard fact from prime end theory: $c$ separates $\tilde{U}_+$ in exactly two connected components.

\begin{definition}
An arc $\gamma: [0,1] \to V$ is said to be a \emph{hair} of $V$ if $\gamma(0) \in \R \times \{1\}$.
More generally, an arc $\gamma: [0,1] \to \tilde{U}_+$ is a \emph{hair} if $\gamma - k$ is a hair of $V$ for some $k \in \Z$.
In that case $\gamma$ is called a hair of $V + k$.
\end{definition}

\begin{figure}[htb]
\begin{center}
\includegraphics[scale = 1]{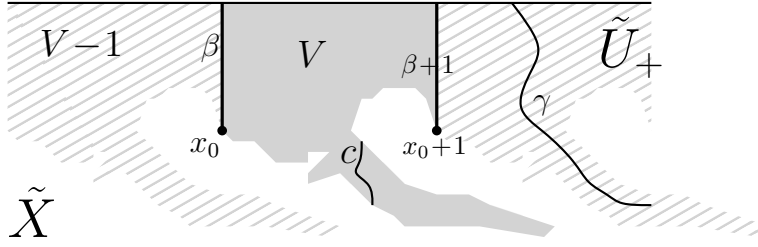}
\end{center}
\caption{$c$ is a crosscut of $\tilde{U}_+$ and $\gamma$ is a hair of $V + 1$.}
\end{figure}

\begin{lemma}\label{lem:aladerecha}
There exist $m \ge 1$ satisfying $(\tilde{f}^m(x))_1 \ge (x)_1 + 1$ for every $x \in \tilde{X}$.
Furthermore, there exists $M > 1$ such that if $x$ is a point in $V$ for which every hair $\gamma$ of $V$
ending at $x$ satisfies $\diam((\gamma)_1) > M$ then $(\tilde{f}^m(x))_1 \ge (x)_1 + 1 / 2$.
\end{lemma}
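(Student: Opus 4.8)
The plan is to prove the two assertions separately: the uniform displacement of $\tilde X$ comes from a standard subadditivity argument, while the displacement of deep points of $V$ comes from a planar‑separation argument built on the tiling of $\tilde{U}_+$ by the regions $V+k$.

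For the first assertion I would encode the displacement as a cocycle. Let $\phi(z)=(\tilde{f}(z))_1-(z)_1$; this is continuous and $T$--periodic, hence descends to a continuous function on the compact set $X$, and its Birkhoff sums telescope, $\sum_{i=0}^{n-1}\phi(\tilde{f}^i(z))=(\tilde{f}^n(z))_1-(z)_1$. Put $a_n=\min_{z\in X}\big((\tilde{f}^n(z))_1-(z)_1\big)$, attained by compactness. The sequence $(a_n)$ is superadditive, $a_{n+n'}\ge a_n+a_{n'}$, since a Birkhoff sum over $n+n'$ steps splits into two blocks each bounded below by its own minimum; Fekete's lemma then yields $\lim_n a_n/n=\sup_n a_n/n$. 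A Krylov--Bogolyubov argument applied to the empirical measures of near--minimizing orbit segments identifies this limit with $\inf\rho_{mes}(\tilde{f},X)=\inf\rho(\tilde{f},X)$, which is positive by \eqref{eq:}. Hence $a_{n_0}>0$ for some $n_0$, and since $a_{kn_0}\ge k\,a_{n_0}$ I may take $m=kn_0$ with $a_m\ge1$; this is exactly $(\tilde{f}^m(x))_1\ge(x)_1+1$ for every $x\in\tilde{X}$.

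For the second assertion, fix this $m$ and write $g=\tilde{f}^m$. Two soft inputs: $g$ commutes with $T$, hence has globally bounded horizontal displacement and a modulus of continuity $\omega$ on the strip; and $g$ shifts $\tilde{X}$ rightwards by $\ge1$. The geometric heart is a slice estimate coming from the tiling $\tilde{U}_+=\bigcup_k(V+k)$ with pairwise disjoint interiors meeting only along the arcs $\beta+k$, which lie at heights $\ge\eta$. I would show that for every height $b<\eta$ each connected component of the horizontal slice $\{a:(a,b)\in\tilde{U}_+\}$ has length $\le1$: otherwise it would contain two points differing by an integer, placing one point in two distinct interiors $\interior(V+k)$. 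Consequently every point of $\tilde{U}_+$ below height $\eta$ sits on a horizontal crosscut $\sigma$ of $\tilde{U}_+$ of diameter $\le1$ whose endpoints lie in $\tilde{X}$. Now assume every hair to $x$ has $\diam((\gamma)_1)>M$. For $M>1$ no short vertical hair reaches $x$, so $x$ is attained only through an excursion below height $\eta$; in particular $x$ lies on such a crosscut $\sigma_x$, with right endpoint $q\in\tilde{X}$ satisfying $(q)_1\ge(x)_1$. Applying $g$, the image $g(\sigma_x)$ is again a crosscut through $g(x)$, now with $(g(q))_1\ge(q)_1+1\ge(x)_1+1$. If the conclusion failed, i.e. $(g(x))_1<(x)_1+1/2$, then $g(\sigma_x)$ would join a point of first coordinate $<(x)_1+1/2$ to one of first coordinate $\ge(x)_1+1$, hence have diameter $>1/2$, contradicting $\diam(g(\sigma_x))\le\omega(\diam(\sigma_x))$ once $M$ is chosen so that the crosscut through $x$ has diameter $\le s_0$ with $\omega(s_0)<1/2$.

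The step I expect to be the main obstacle is precisely this last one: the slice bound only produces a crosscut through $x$ of diameter $\le1$, whereas the clean $+1/2$ estimate needs its image to dip leftwards by less than $1/2$, i.e. a crosscut below the scale at which $\omega<1/2$. Bridging this gap—arguing that the hypothesis that \emph{all} hairs are long forces $x$ onto an arbitrarily thin crosscut into $\tilde{X}$, and not merely one of width $\le1$—is the delicate point, and it is what dictates both the choice of $M$ and the weakening of the constant from $1$ to $1/2$. I would attack it by following a near--minimal hair, which must repeatedly cross such short crosscuts, and extracting from the long excursion one on which $g$ contracts enough; the separation of $\tilde{U}_+$ into two disks by each crosscut is what keeps $g(x)$ trapped on the right.
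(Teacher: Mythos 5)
Your first assertion is proved correctly: the superadditivity of $a_n=\min_{z\in\tilde{X}}\bigl((\tilde{f}^n(z))_1-(z)_1\bigr)$ plus Fekete, with empirical measures identifying $\lim a_n/n$ with $\inf\rho_{mes}(\tilde{f},X)>0$, is a sound variant of the paper's argument (the paper reaches the same conclusion by extracting a weak--$*$ limit of empirical measures along a hypothetical sequence of slow points and contradicting \eqref{eq:} directly). The second assertion, however, contains a genuine gap, which you yourself flag. The slice argument, even once repaired, only yields a crosscut $\sigma_x$ through $x$ of diameter at most $1$, and nothing shrinks it when $M$ grows: the horizontal slice component through a fixed $x$ has a fixed length, and the hypothesis that all hairs of $V$ ending at $x$ are horizontally long does not thin it. So the uniform-continuity bound gives only $(\tilde{f}^m(x))_1\ge (x)_1+1-\omega(1)$, and there is no reason for $\omega(1)<1/2$; the closing sketch (``follow a near-minimal hair and extract a contracting excursion'') is a hope, not an argument, and it is unclear it can even be true pointwise, since $x$ need not lie on any crosscut of small diameter. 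A secondary flaw: your justification of the slice bound (``one point in two distinct interiors $\interior(V+k)$'') is wrong, because in cases (ii) and (iii) the tiles $V+k$ are horizontally unbounded and a unit horizontal segment can lie in a single tile. The correct reason is that a segment $[w,w+1]\times\{b\}\subset\tilde{U}_+$ projects onto the essential circle $S^1\times\{b\}\subset U_+$, which forces the connected set $X$ entirely to one side and contradicts either $S^1\times\{-1\}\subset U_-$ or the existence of a point of $\tilde{X}$ above the slice; note this needs a point of $\tilde{X}$ vertically above $x$ (which the long-hair hypothesis does supply, via the vertical ray from $x$), not your condition $b<\eta$.

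The idea you are missing is that the paper avoids crosscuts and continuity scales entirely and argues by area. Set $\tilde{Y}=\{z\in\R\times[-1,1]:(\tilde{f}^m(z))_1\ge(z)_1+1/2\}$; by the first assertion and continuity this is a $T$-invariant neighborhood of $\tilde{X}$, its projection $Y$ is a compact neighborhood of $X$, and $\dist(X,\partial Y)=\delta>0$. If no $M$ worked, there would be points $z_n\in V\setminus\tilde{Y}$ all of whose hairs have horizontal diameter $>n$; each such point carries a $\delta$-ball disjoint from $\tilde{X}$, and the unbounded hair diameters allow one to pass to a subsequence whose $\delta$-balls are pairwise disjoint and contained in $V$ --- impossible, since $V$ has finite area (it covers $U_+\subset\A$ essentially once). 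This pigeonhole step is exactly the substitute for the estimate your approach cannot deliver: instead of trying to place $x$ on a crosscut thin enough that $\tilde{f}^m$ moves it coherently, one shows directly that a point of $V$ reachable only by long hairs must lie in $\tilde{Y}$, where the displacement $\ge 1/2$ holds by definition.
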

\begin{proof}
For the first part, suppose on the contrary that there are integers $\{n_i\}_{i \ge 1} \to +\infty$
and points $\{x_i\}_{i \ge 1}$ in $\tilde{X}$ such that $(\tilde{f}^{n_i}(x_i))_1 < (x_i)_1 + 1$.
The probability measures defined on $X$ by
$$\mu_{i} = \frac{1}{n_i} \sum_{j = 0}^{n_i-1} \delta_{f^j(\pi(x_i))}$$
satisfy $\rho(\tilde{f}, \mu_{i}) = (\tilde{f}^{n_i}(x_i))_1/n_i - (x_i)_1/n_i < 1/n_i$.
The space of Borel probability measures on $X$ endowed with the weak topology being compact and metric,
there is a subsequence
$\{\mu_{i_j}\}_{j \ge 1}$, $i_j \to +\infty$, of $\{\mu_i\}$ whose limit is a Borel probability measure $\mu$.
By continuity of the pushforward operator
\[
f_*(\mu) - \mu = \lim_j f_*(\mu_{i_j}) - \mu_{i_j} =
\lim_j \frac{1}{n_{i_j}} (\delta_{f^{n_{i_j}}(\pi(x_i))} - \delta_{\pi(x_i)}) = 0,
\]
$\mu$ is $f$--invariant, and by the weak convergence $\mu_{i_j} \to \mu$
\[
\rho(\tilde{f}, \mu) = \lim_j \rho(\tilde{f}, \mu_{i_j}) \le \lim_j 1/n_{i_j} = 0,
\]
which contradicts $(\ref{eq:})$.

For the second, let $\tilde{Y} = \{z \in \R \times [-1, 1]: (\tilde{f}^m(z))_1 \ge (z)_1 + 1/2\}$
be a neighborhood of $\tilde{X}$.
Clearly, the projection $Y = \pi(\tilde{Y})$ of $\tilde{Y}$ onto $\A$ is a compact neighborhood of $X$ and
$\dist(X, \partial Y) = \delta > 0$.
If the statement does not hold one can find points $z_n \in V \setminus \tilde{Y}$ such that any hair $\gamma$ of $V$
ending at $z_n$ satisfies $\diam((\gamma)_1) > n$, for every $n \ge 1$. It is possible to choose an infinite subsequence
$\{z_{n_j}\}_j$ of $\{z_n\}_n$ so that the balls centered at $z_{n_j}$ of radius $\delta$ are pairwise disjoint
and contained in $V$. This is impossible because $V$ has finite area.
\end{proof}

For simplicity, for the rest of the proof replace $f$ by $f^m$, where $m$ is as in the previous lemma.
Then, $(\tilde{f}(x))_1 \ge (x)_1 + 1$ for any $x \in \tilde{X}$ and Inequality (\ref{eq:}) still holds.

The following object gives a way to roughly describe the shape of $V$. Construct hairs $\gamma_n$, $n \ge 1$, in $V$
such that $l_{n+1} < l_n$, $r_{n+1} > r_n$, where $l_n = \min (\gamma_n)_1$ and $r_n = \max (\gamma_n)_1$.

There are three mutually exclusive cases depending on $V$:
\begin{itemize}
\item[(i)] It is not possible to have $\lim_n l_n = - \infty$.
\item[(ii)] There is an infinite sequence of hairs $\{\gamma_n\}_n$
such that $\lim_n l_n = - \infty$ and $\{r_n\}_n$ is bounded.
\item[(iii)] For any infinite sequence of hairs $\{\gamma_n\}_n$
such that $\lim_n l_n = - \infty$, always $\lim_n r_n = + \infty$.
\end{itemize}

They correspond to:
(i) $(V)_1$ is bounded from below, (ii) $(V)_1$ unbounded from below but bounded from above and (iii)
$(V)_1$ is unbounded both from below and above.

The proof of Theorem \ref{thm:} deals separately with these three cases. Lemmas \ref{lem:primeends} and \ref{lem:aladerecha}
are extensively used to derive a contradiction with Inequality \ref{eq:} in each of them.

\bigskip

\textbf{Case (i): $(V)_1$ is bounded from below.}

In this case there exists $L \in \R$ such that every point $x \in \tilde{X}$ accessible from $V$ satisfies $L < (x)_1$.
Consider $k$ so that $\tilde{f}(x_0)$ is accessible from $V + k$. Lemma \ref{lem:primeends} ensures $k \le 0$.
It follows that for any point $z \in \tilde{X}$ accessible from $V^-$ there exists $n \ge 0$ such that
$\tilde{f}^{-n}(z)$ is accessible from $V + j$, for some $k \le j \le 0$.
The lower bound on $(V)_1$ implies $L + k < (\tilde{f}^{-n}(z))_1$
so, by Lemma \ref{lem:aladerecha}, $L + k < (z)_1$, which is absurd. \qed

\bigskip

\textbf{Case (ii): $(V)_1$ is unbounded from below but bounded from above.}

The union of $\gamma_n$, $n \ge 1$, is contained in $V$ and divides $\tilde{U}_+$ in many connected components. Denote $B$
the component which contains $(V - 1)^-$. Clearly, $B$ is unbounded from the left
but satisfies $\sup(B)_1 \le M$, where $M$ is an upper bound for $\{r_n\}_n$.

Let $x \in \tilde{X}$ be accessible from $V - 1 \subset B$.
For every $n \ge 1$, by Lemma \ref{lem:primeends}
the point $\tilde{f}^n(x)$ is accessible from $(V - 1)^-$ and thus from $B$ as well.
However, for large $n \ge 0$, Lemma \ref{lem:aladerecha} implies $(\tilde{f}^n(x))_1 > M$
and, in particular, $\tilde{f}^n(x)$ cannot belong to the adherence of $B$. \qed

\bigskip

\textbf{Case (iii): $(V)_1$ is unbounded both from below and above.}

This case is more involved and some preliminary results are needed.
Firstly, the shape of the region $V$ is shown to be snake--like. This idea is made precise in the following proposition.
\begin{proposition}\label{prop:}
There exist sequences $\{L_n\}_n, \{R_n\}_n$ of real numbers such that
\begin{enumerate}
\item $R_1 > 1, L_1 < 0$.
\item $\{L_n\}_n$ is decreasing and tends to $- \infty$.
\item $\{R_n\}_n$ is increasing and tends to $+ \infty$.
\end{enumerate}
such that
\begin{itemize}
\item[(i)] If $\gamma$ is a hair of $V$ and $(\gamma(1))_1 < L_n$ then $R_n \in (\gamma)_1$.
\item[(ii)] If $\gamma$ is a hair of $V$ and $(\gamma(1))_1 > R_n$ then $L_{n-1} \in (\gamma)_1$.
\end{itemize}
\end{proposition}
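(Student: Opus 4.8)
The plan is to reformulate conditions (i) and (ii) as statements about hairs that stay on one side of a vertical line, and then to build the two sequences by a direct induction. Recall that a hair $\gamma$ of $V$ starts at a point of $\{y=1\}\cap V$, so $(\gamma(0))_1\in[0,1]$ and its projection $(\gamma)_1$ is an interval containing that starting abscissa. Hence, for $R>1$ one has $R\notin(\gamma)_1$ if and only if $\max(\gamma)_1<R$, and for $L<0$ one has $L\notin(\gamma)_1$ if and only if $\min(\gamma)_1>L$. For $c\in\R$ set
\[
\ell(c)=\inf\{(\gamma(1))_1:\gamma\text{ is a hair of }V,\ \max(\gamma)_1<c\},\qquad
r(c)=\sup\{(\gamma(1))_1:\gamma\text{ is a hair of }V,\ \min(\gamma)_1>c\}.
\]
Reading the contrapositives, condition (i) for the pair $(L_n,R_n)$ becomes exactly $\ell(R_n)\ge L_n$, while condition (ii) for the pair $(L_{n-1},R_n)$ becomes exactly $r(L_{n-1})\le R_n$. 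Thus it suffices to produce $R_n\uparrow+\infty$ and $L_n\downarrow-\infty$ with $R_1>1$, $L_1<0$, satisfying $L_n\le\ell(R_n)$ for all $n$ and $R_n\ge r(L_{n-1})$ for all $n\ge2$.

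The key point, and the only place where the hypothesis of Case (iii) is used, is that $\ell(c)$ and $r(c)$ are finite for every $c$. Indeed, if $\ell(c_0)=-\infty$ there would be hairs $\gamma_n$ with $\max(\gamma_n)_1<c_0$ and $(\gamma_n(1))_1\to-\infty$; then $r_n=\max(\gamma_n)_1$ stays bounded while $l_n=\min(\gamma_n)_1\le(\gamma_n(1))_1\to-\infty$, which is exactly the configuration forbidden in Case (iii). The finiteness of $r(c)$ follows from the left--right mirror of this argument: reflection in a vertical line carries hairs to hairs and interchanges the roles of $l_n$ and $r_n$, and since Case (iii) is characterized by $(V)_1$ being unbounded both from below and from above, a condition invariant under this reflection, the symmetric form of (iii) also holds and excludes $r(c_0)=+\infty$. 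I expect this finiteness statement to be the main obstacle, since it is where the snake-like geometry must be extracted from the abstract case hypothesis.

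With finiteness available the construction is routine. First, $\ell(c)\to-\infty$ as $c\to+\infty$: unboundedness of $(V)_1$ from below yields points of $V$ of arbitrarily negative abscissa, each joined to the top segment of $V$ by a hair of finite maximal abscissa, whence $\ell(c)\le-N$ for all large $c$. In particular one may fix $R_1>1$ with $\ell(R_1)<0$ and set $L_1=\ell(R_1)$. Assuming $R_n,L_n$ defined, put
\[
R_{n+1}=\max\{R_n+1,\,r(L_n)\},\qquad L_{n+1}=\min\{L_n-1,\,\ell(R_{n+1})\}.
\]
Both are finite by the previous paragraph, and by construction $R_{n+1}\ge R_n+1$ and $L_{n+1}\le L_n-1$, so the sequences are strictly monotone and diverge to $\pm\infty$; moreover $L_{n+1}\le\ell(R_{n+1})$ yields (i) at step $n+1$ and $R_{n+1}\ge r(L_n)$ yields (ii) at step $n+1$. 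Together with $L_1=\ell(R_1)$, which is (i) at $n=1$, and $R_1>1$, $L_1<0$, this gives all the asserted properties and completes the proof.
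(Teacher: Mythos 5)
Your reduction of (i) and (ii) to the two bounds $\ell(R_n)\ge L_n$ and $r(L_{n-1})\le R_n$, and the recursive construction of the sequences once $\ell(c)$ and $r(c)$ are known to be finite, are both correct; the gap sits exactly where you predicted, in the finiteness claim, and specifically in $r(c)<+\infty$. The case (iii) hypothesis, as literally stated, is one-sided: it constrains only sequences of hairs with $l_n\to-\infty$, so it is \emph{not} carried to itself by a vertical reflection. Its mirror image, that every sequence of hairs with $r_n\to+\infty$ must have $l_n\to-\infty$, is precisely equivalent to $r(c)<+\infty$ for all $c$, i.e.\ it is the thing to be proved. Your route around this, via the remark that case (iii) ``corresponds to'' $(V)_1$ being unbounded on both sides, is circular: the direction you need, that two-sided unboundedness of the projection forces the sequence property of case (iii) for the reflected region, is an unproven descriptive gloss in the paper, not a hypothesis, and it carries essentially the whole geometric content of the proposition. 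Concretely, nothing your argument uses rules out a comb-shaped configuration: a family of arcs starting in $[0,1]\times\{1\}$ in which the $m$-th ``left'' hair has $\min=-m$ and $\max=m$, while the $m$-th ``right'' hair stays to the right of abscissa $-1$ and ends at abscissa $m$. This family satisfies the literal case (iii) property (any sequence with $l_n\to-\infty$ consists eventually of left hairs, whence $r_n\to+\infty$) and has projection unbounded on both sides, yet $r(0)=+\infty$. So the gap is essential, not cosmetic: some property of $V$ beyond those you invoke must enter.

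What excludes the comb for an actual fundamental region $V$ is the equivariant structure your proof never touches, and it is the heart of the paper's argument: a hair $\gamma$ of $V$ is disjoint from every integer translate $\alpha+k$, $k\neq 0$, of any hair $\alpha$ of $V$. From this non-crossing fact the paper extracts two quantitative implications, $(\star)$: if $(\gamma(1))_1>\max(\alpha)_1+1$ then $\min(\gamma)_1<\min(\alpha)_1+1$, and $(\star\star)$: if $(\gamma(1))_1<\min(\alpha)_1-1$ then $\max(\gamma)_1>\max(\alpha)_1-1$, and then runs a recursive selection of the $L_n,R_n$ very close in spirit to your induction. With $(\star\star)$ your missing bound is immediate: fix one hair $\delta$ with $(\delta(1))_1<c-1$, which exists since $(V)_1$ is unbounded below; every hair $\alpha$ with $\min(\alpha)_1>c$ satisfies $(\delta(1))_1<\min(\alpha)_1-1$, hence $\max(\alpha)_1<\max(\delta)_1+1$, so $r(c)\le\max(\delta)_1+1<+\infty$, and $(\star)$ symmetrically gives $\ell(c)>-\infty$ without leaning on the exact phrasing of the case hypothesis. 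If you supply the disjointness of translated hairs and derive $(\star)$, $(\star\star)$, your $\ell$/$r$ formalism closes the argument and becomes a clean repackaging of the paper's ``mechanical routine''; without them, the proof does not go through.
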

\begin{proof}
Let $\alpha, \gamma$ be hairs of $V$. Notice the following simple fact:
$\gamma$ does not intersect any translated $\alpha + k$ unless $k = 0$. As a consequence:
\begin{itemize}
\item[($\star$)] If $(\gamma(1))_1 > \max(\alpha)_1 + 1$ then $\min(\gamma)_1 < \min(\alpha)_1 + 1$.
\item[($\star\star$)] If $(\gamma(1))_1 < \min(\alpha)_1 - 1$ then $\max(\gamma)_1 > \max(\alpha)_1 - 1$.
\end{itemize}
\begin{figure}[htb]
\begin{center}
\includegraphics[scale = 0.7]{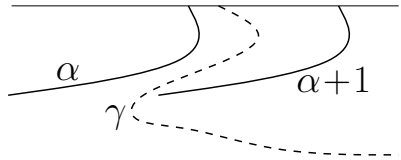}
\end{center}
\caption{Figure for statement ($\star$)}
\end{figure}
The proof goes on following a mechanical routine using the sequence of hairs $\{\gamma_n\}_n$
and the associated scalar sequences $\{l_n\}_n, \{r_n\}_n$.
First, choose $m_1$ so that $r_{m_1} > 2$ and $l_{m_1} < 1$.
Take $R_1 = r_{m_1} - 1$ and $L_1 = l_{m_1} - 1$ and note that by ($\star\star$) with $\alpha = \gamma_{m_1}$
the statement (i) holds for $n = 1$.
Then, take $m'_2$ so that $l_{m'_2} < L_1 - 1$ and set $R_2 = \max(\gamma_{m'_2})_1 + 1$.
Clearly ($\star$) forces (i) to hold for $n = 2$.
Let $m_2$ such that $r_{m_2} > R_2 + 1$ and define $L_2 = \min(\gamma_{m_2})_1 - 1$. Using again ($\star\star$)
it follows that (i) holds for $n = 2$.
This procedure can be continued indefinitely and yields the sequences $\{L_n\}_n$ and $\{R_n\}_n$.
\end{proof}

The next two lemmas are obtained as corollaries of the previous proposition.
\begin{lemma}\label{lem:pasaporizquierda}
For every $L < 0$ there exists $R > 1$ such that if $\gamma$ is a hair
contained in $V^-$ and $(\gamma(1))_1 \ge R$ then $L \in (\gamma)_1$.
\end{lemma}
\begin{proof}
Fix $n$ so that $L_{n-1} \le L$ and define $R = R_n$.
Then $\gamma$ is a hair in $V^-$ whose endpoint is not on the left of $R_n$, that is, $(\gamma(1))_1 \ge R = R_n$.
By Proposition \ref{prop:}, $L_{n-1} \in (\gamma)_1$ and, consequently, $L \in (\gamma)_1$.
\end{proof}
\begin{lemma}\label{lem:pasaporderecha}
For every $R > 1$ there exists $L' < 0$ such that if $\gamma$ is a hair
contained in $V^+$ and $(\gamma(1))_1 \le L'$ then $R \in (\gamma)_1$.
\end{lemma}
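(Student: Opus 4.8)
The plan is to prove this as the exact mirror image of Lemma \ref{lem:pasaporizquierda}, trading left for right and invoking part (i) of Proposition \ref{prop:} in place of part (ii). Recall that part (i) asserts that a hair of $V$ ending far to the left (below the height $L_n$) is forced to cross the vertical line at abscissa $R_n$; this is exactly the mechanism that makes a hair reaching far to the left sweep across the level $R$ on its way there.

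First, given $R > 1$, I would fix an index $n$ so large that $R_n \ge R$, which is possible because $\{R_n\}_n$ is increasing with limit $+\infty$ by Proposition \ref{prop:}. I then set $L' = L_n$, which is negative since $\{L_n\}_n$ decreases to $-\infty$, and claim this choice works. Next, take an arbitrary hair $\gamma$ contained in $V^+$ with $(\gamma(1))_1 \le L'$. Since $\gamma$ is a hair it lies in a single translate $V + k$, and because it is contained in $V^+ = \bigcup_{j \ge 0}(V + j)$ we have $k \ge 0$. Passing to the translate $\gamma - k$, which is a genuine hair of $V$, the endpoint only moves further to the left, so $((\gamma - k)(1))_1 = (\gamma(1))_1 - k \le L' = L_n$. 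Part (i) of Proposition \ref{prop:} then yields $R_n \in (\gamma - k)_1$, equivalently $R_n + k \in (\gamma)_1$, and since $k \ge 0$ this point has abscissa at least $R_n \ge R$.

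Finally, the projection $(\gamma)_1$ is a connected interval of $\R$ containing both $(\gamma(1))_1 \le L' < 0 < R$ and $R_n + k \ge R$, so by the intermediate value property it must contain $R$ itself, which is the desired conclusion.

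The only genuine subtlety, and the step I would treat most carefully, is the reduction from a hair contained in $V^+$ to a hair of $V$ by translation, since Proposition \ref{prop:} is stated literally only for hairs of $V$. One must verify that shifting by $-k$ with $k \ge 0$ preserves the hypothesis $(\gamma(1))_1 \le L_n$ of part (i); it does, precisely because the shift decreases the abscissa of the endpoint, and correspondingly shifts the guaranteed crossing at $R_n$ to $R_n + k \ge R$, which is harmless. Everything else is a routine transcription of the argument for Lemma \ref{lem:pasaporizquierda}.
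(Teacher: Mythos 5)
Your proof is correct and is exactly the argument the paper intends: the paper states Lemma \ref{lem:pasaporderecha} without proof as a corollary of Proposition \ref{prop:}, its twin Lemma \ref{lem:pasaporizquierda} being proved by the same translation-and-connectedness routine, and you even make explicit the reduction from a hair in $V^+$ to a hair of $V$ that the paper glosses over. One cosmetic adjustment: since part (i) of Proposition \ref{prop:} is stated with the strict inequality $(\gamma(1))_1 < L_n$, choose $L'$ strictly below $L_n$ (e.g.\ $L' = L_n - 1$) so that the case $k = 0$ with $(\gamma(1))_1 = L_n$ is also covered.
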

In the setting of Lemma \ref{lem:pasaporderecha}, there is a cross-cut $c$ in $\tilde{U}_+$ which separates $\gamma(1)$ from
$\R \times \{1\}$ and such that $(c)_1 = \{R\}$.

\medskip

Let finish the proof of case (iii).
Apply Lemma \ref{lem:pasaporizquierda} to $L = -M$ (where $M$ comes from Lemma \ref{lem:aladerecha}) to obtain $R > 1$.
Consider the family
$$
\mathcal{A} = \{c \text{ is a cross-cut of } \tilde{U}_+ \text{ contained in some }(V + k)^-
\text{ such that } \min (c)_1 \ge R + k\}
$$
By definition, $\mathcal{A}$ is invariant by integer translations. It is not empty because $(V)_1$ is not bounded from above.
In addition, $\mathcal{A}$ is $\tilde{f}$-invariant.
Indeed, if $c \in \mathcal{A}$ then by Lemma \ref{lem:aladerecha} it is automatically contained in the region
$\{x \in \R \times [-1,1]: (\tilde{f}(x))_1 > (x)_1\}$. Consequently, $\min (\tilde{f}(c))_1 > \min (c)_1 \ge R + k$.
Since $c \subset (V + k)^-$, it then follows from Lemma \ref{lem:primeends} that $\tilde{f}(c)$ is contained
in $(V + k)^-$. Thus, $\tilde{f}(c) \in \mathcal{A}$.

Apply now Lemma \ref{lem:pasaporderecha} to $R$ to obtain $L'$.
Denote $x_n = \tilde{f}^n(x_0)$ the orbit of $x_0$.
Since $(x_n)_1 \to -\infty$ as $n$ tends to $-\infty$, there is $m > 0$ such that $(x_{-m})_1 \le L'$.
By Lemma \ref{lem:primeends},
the points $x_n$ are accessible from $V^+$ for $n \le 0$, so $x_{-m} + k$ is accessible from $V$ for some $k \le 0$.
Since $(x_{-m} + k)_1 \le L' + k \le L'$, remark after Lemma \ref{lem:pasaporderecha} provides
a cross-cut $c$ of $\tilde{U}_+$ in $V$ which separates $x_{-m} + k$ from $\R \times \{1\}$ and $\min (c)_1 = R$.
Thus, $c \in \mathcal{A}$ and $c_{-m} = c - k \in \mathcal{A}$.

The cross-cut $c_{-m} \in \mathcal{A}$ separates $x_{-m}$ from $\R \times \{1\}$ in $\tilde{U}_+$.
Thus, $\tilde{f}^{m}(c_{-m}) \in \mathcal{A}$ separates $\tilde{f}^{m}(x_{-m}) = x_0$ from $\R \times \{1\}$.
However, the arc $\beta$ joins $x_0$ and $\R \times \{1\}$ and does not meet any element of $\mathcal{A}$
because $(\beta)_1 = \{0\}$ is disjoint to $[R, +\infty)$. \qed

\end{document}